\newcommand{\R}{\mathbb{R}}
\newcommand{\vol}{\mathsf{vol}}
\newcommand{\scal}{\mathrm{scal}}
\newcommand{\Mk}{\mathbb{M}_\kappa^{n+1}}
\newcommand{\<}{\langle}
\renewcommand{\>}{\rangle}
\renewcommand{\phi}{\varphi}
\renewcommand{\Re}{\mathsf{Re}}
\DeclareMathOperator{\id}{\mathrm{id}}
\DeclareMathOperator{\End}{\mathsf{End}}
\newtheorem{theorem}{Theorem}
\newtheorem*{conjecture}{Conjecture}
\newtheorem{lemma}{Lemma}
\newtheorem{proposition}{Proposition}
\theoremstyle{definition}
\newtheorem*{remark}{Remark}
\newtheorem{example}{Example} 
\begin{document}

\title{Upper bound for the total mean curvature of spin fill-ins} 
\author{Christian B\"ar}
\address{Universit\"at Potsdam, Institut f\"ur Mathematik, 14476 Potsdam, Germany}
\email{\href{mailto:christian.baer@uni-potsdam.de}{christian.baer@uni-potsdam.de}}
\urladdr{\url{https://www.math.uni-potsdam.de/baer/}}

\begin{abstract} 
Gromov conjectured that the total mean curvature of the boundary of a compact Riemannian manifold can be estimated from above by a constant depending only on the boundary metric and on a lower bound for the scalar curvature of the fill-in.
We prove Gromov's conjecture if the manifolds are spin with a constant that also depends on a lower bound on the mean curvature $H$ (which is allowed to take negative values).
If the boundary is a (not necessarily convex) hypersurface in a space form of non-negative curvature, then the constant can be made explicit in terms of the mean curvature of this model embedding.
If the boundary has constant sectional curvature $\kappa>0$ and is a projective space of dimension $n\equiv 3 \mod 4$ or a sphere, then the constant can be expressed in terms of $\kappa$.
If the boundary is a flat torus, then the constant can be expressed in terms of lattice data.
\end{abstract}

\keywords{Spin fill-in, mean curvature bound, scalar curvature, Dirac operator}

\subjclass[2020]{53C20, 53C27}

\date{\today}

\maketitle

\section{Introduction} 

Given a compact Riemannian spin manifold $M$ without boundary and a number $\lambda\ge0$, we may consider all compact connected manifolds $X$ that bound $M$ as a Riemannian spin manifold and satisfy the scalar curvature bound $\scal_X\ge -\lambda^2$.
Given such an $X$, let $H$ be the unnormalized mean curvature of the boundary $\partial X=M$.
Our sign convention for the mean curvature is such that the boundary of an $(n+1)$-dimensional Euclidean ball of radius $r$ has mean curvature $H=n/r$.

Of course, $M$ may not bound any compact manifold $X$, but if it does, it follows from \cite{SWW}*{Theorem~1.1} by Shi, Wang, and Wei that the given metric on $M$ can be extended to a metric on $X$ with $\scal_X>0$.
Their result gives no control over the boundary mean curvature.
Theorem~3.7 in \cite{BH} by the author and Hanke implies that the metric on $X$ can be deformed in such a way that its scalar curvature remains positive, it still induces the given metric on $M$, and the mean curvature of the boundary becomes arbitrarily negative.
Thus
\[
\inf \int_M H = -\infty
\]
where the infimum is taken over $X$ with $\scal_X>0$ and $\partial X=M$ (as Riemannian manifolds).

While we can make $H$ as small as we like, Gromov conjectured that $\int_M H$ cannot be made arbitrarily large.
More precisely, the following is believed to hold:
\begin{conjecture}[Gromov \cite{G}*{p.~232}]
Let $M$ be a compact Riemannian manifold without boundary which bounds a compact manifold and let $\sigma\in\R$.
Then there exists a constant $C(M,\sigma)$ such that for each compact Riemannian manifold $X$ with boundary $\partial X=M$ and scalar curvature $\scal_X\ge \sigma$ we have
\[
\int_M H \le C(M,\sigma).
\]
\end{conjecture}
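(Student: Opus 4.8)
The plan is to attack the conjecture with the Dirac operator on the fill-in, in the spirit of Witten's spinor proof of the positive mass theorem and of the recent spinorial scalar/mean-curvature comparison estimates. Write $n=\dim M$ and set $\kappa=\sigma/(n(n+1))$, so that the model space form $\Mk$ has scalar curvature exactly $\sigma$; one expects the extremal configuration to be a geodesic ball in $\Mk$ whose boundary sphere has the same area as $M$. Assuming $X$ is spin, let $D$ be the Atiyah--Singer--Dirac operator on $X$ and impose a chirality (``MIT bag'') boundary condition along $M=\partial X$. The virtue of this boundary condition is that it makes $D$ self-adjoint and collapses the boundary integral in Green's formula and in the Lichnerowicz--Weitzenb\"ock identity: for every spinor field $\psi$ on $X$ satisfying the boundary condition one obtains an identity of the shape
\[
\int_X\!\Big(|D\psi|^2-|\nabla\psi|^2-\tfrac14\scal_X|\psi|^2\Big)\,dV \;=\; \pm\tfrac12\int_M H\,|\psi|^2\,dA ,
\]
with the boundary Dirac operator and the normal derivative having dropped out, so that $H$ appears linearly and (for a fixed spinor) with a definite sign.

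The key steps would then be: (i) choose the sign of the chirality condition so that the boundary term has the sign needed for an upper bound, and use as test spinor a field $\psi=f(t)\,\psi_0$ on a collar of $M$ in $X$, where $\psi_0$ is built from a Killing spinor of the model $\Mk$ (restricted to the model boundary sphere and satisfying the chirality condition there) and $f(t)$ is a profile in the distance $t$ from $M$, extended by $0$ into the bulk; (ii) insert this into the identity above and, using $\scal_X\ge\sigma$ to estimate $-\tfrac14\int_X\scal_X|\psi|^2\le -\tfrac{\sigma}{4}\int_X|\psi|^2$ and discarding $-\int_X|\nabla\psi|^2\le 0$, optimize $f$ exactly as in the Friedrich/Obata comparison; the optimal profile is governed by $\kappa$ and makes the surviving bulk integrals controllable purely in terms of the collar width, which favours the estimate whether the collar is thin (small bulk contribution) or wide (optimal decay); (iii) read off an inequality $\int_M H\,|\psi_0|^2\,dA\le(\text{model value }\int_{\partial B}H\text{ in }\Mk)+(\text{correction from }\lambda_1(D^M))$; (iv) since $|\psi_0|$ is essentially constant along $M$ when $\psi_0$ comes from a Killing spinor, divide out and obtain $\int_M H\le C(M,\sigma)$ with $C$ depending only on $\mathrm{area}(M)$, a spectral quantity such as $\lambda_1(D^M)$, and $\sigma$. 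The parity nuisance that the chirality operator requires $\dim X=n+1$ even is handled, for $n$ even, by passing from $X$ to $X\times S^1$, which multiplies $\int_M H$ and $\mathrm{area}(M)$ by $2\pi$ and leaves $\scal$ unchanged.

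The step I expect to be the real obstacle is removing the spin hypothesis, with a secondary difficulty in the sign-indefiniteness of the boundary term. A compact $X$ with $\partial X=M$ need not be spin even when $M$ is, so the Dirac operator is simply unavailable in general; the natural substitute would be a warped $\mu$-bubble or stable minimal hypersurface argument \`a la Gromov--Schoen--Yau, which does control $\scal_X\ge\sigma$ without spin, but it is not clear how the \emph{total} mean curvature $\int_M H$, as opposed to a pointwise or supremum bound on $H$, should enter such a dimension-descent scheme, and I would expect this to require a genuinely new idea. The second difficulty is intrinsic to the spinor method: once $H$ changes sign the boundary term $\int_M H\,|\psi_0|^2$ is no longer sign-definite, so converting the weighted estimate into a bound on $\int_M H$ forces one to keep $|\psi_0|^2$ comparable to a constant on \emph{all} of $M$ and seems to demand either $H\ge 0$ or at least a lower bound on $H$, with the constant then also depending on that bound. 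Accordingly I would expect this plan to prove the conjecture cleanly for spin fill-ins with $H\ge 0$, to yield a version with an extra lower mean-curvature bound for general spin fill-ins, and to leave the non-spin case open.
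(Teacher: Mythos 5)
Your overall frame---the Dirac operator on the spin fill-in, a chirality-type boundary condition, the Lichnerowicz--Weitzenb\"ock formula with $\scal_X\ge\sigma$, and the prediction that the method requires spin and a lower bound on $H$---matches the paper's setup and the scope of its theorem. But the core mechanism of your argument has a genuine gap. Your claimed boundary identity is wrong: under the chirality condition the tangential Dirac operator does \emph{not} drop out of the boundary term. Since $D_M$ anticommutes with $s=i\gamma(\nu)$, the term $\int_M\langle D_M\psi,\psi\rangle = 2\Re\int_M\langle D_MP^+\psi,P^-\psi\rangle$ survives, and controlling it is the main content of the proof; it is exactly what produces the constant $C(M)=4\sum_j\|D_M\phi_j\|_{L^2(M)}\|\phi_j\|_{L^2(M)}$. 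Relatedly, a cutoff spinor $f(t)\psi_0$ supported in a collar is not harmonic, so $\int_X|D\psi|^2$ and $\int_X|\nabla\psi|^2$ depend on the unknown geometry of $X$ near $M$ (collar width, second fundamental form), which cannot be bounded in terms of $M$ and $\sigma$ alone. The paper instead \emph{solves} the boundary value problem $(D-i\sqrt{\tfrac{n+1}{n}}\,\tfrac{\lambda}{2})\Phi=0$ with $P^+(\Phi|_M)$ prescribed (well-posedness is Proposition~\ref{prop.BVP2}, resting on Lemma~\ref{lem.BVP1} and unique continuation); then the bulk term has a sign and can be discarded, and the a priori inequality $\|P^-\Phi\|_{L^2(M)}\le\|P^+\Phi\|_{L^2(M)}=\|P^+\phi\|_{L^2(M)}$ controls the unknown half of the boundary data by the prescribed half.

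More fundamentally, your step (iv) has no analogue for a general $M$: a generic compact spin manifold carries no Killing spinor and no single spinor with $|P^+\psi_0|^2$ constant along $M$, so ``dividing out $|\psi_0|^2$'' cannot upgrade the weighted inequality $\int_M H|\psi_0|^2\le\dots$ to a bound on $\int_M H$. This is precisely why the earlier results you allude to only bound $H_{\min}$ rather than the total mean curvature. The missing idea is Proposition~\ref{prop.sections}: a finite family $\phi_1,\dots,\phi_m$ of smooth sections of $\Sigma X|_M$ such that $\sum_j|P\phi_j|^2\equiv 1$ for \emph{every} half-rank orthogonal projection $P$ (built from local orthonormal frames and a partition of unity with $\sum_j\chi_j^2\equiv1$). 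Summing the weighted inequalities over $j$ makes the weight identically $1$ and yields $\int_M H$ on the nose, with the cross terms controlled by Cauchy--Schwarz and Lemma~\ref{lem.BVP1}. Without this device, or a substitute for it, your plan reproves only the known $H_{\min}$-type estimates and does not reach the integral bound. (A minor further point: the involution $s=i\gamma(\nu)$ exists in every dimension, so the $X\times S^1$ parity detour is unnecessary; and the conjecture in full generality, i.e.\ without the spin hypothesis and without a lower bound on $H$, remains open in the paper as well.)
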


If $\dim(M)=1$ and $\sigma=0$, then this follows from the Gauss-Bonnet theorem.
Indeed, assuming without loss of generality that $X$ is connected, the Gauss-Bonnet theorem gives us
\[
\int_M H = 2\pi\chi(X) - \frac12 \int_X\scal_X \le 2\pi.
\]
If we are more modest and replace the integral of the mean curvature by its minimum $H_{\min} = \min_M H$, then we indeed have
\begin{equation}
H_{\min} \le C(M,\sigma)
\label{eq.HminEstimate}
\end{equation}
for spin manifolds.
This follows from \cite{HMR}*{Theorem~2} by Hijazi, Montiel, and Rold\'an.
Here the constant $C(M,\sigma)$ depends on the first non-negative eigenvalue of the Dirac operator of $M$ and on $\sigma$.
Combining this with an upper Dirac eigenvalue estimate by the author (\cite{Baer}*{Main Theorem}), one obtains \eqref{eq.HminEstimate} where the constant depends on the hyperspherical radius of $M$ and on $\sigma$, as has been observed by Brendle, Tsiamis, and Wang in \cite{BTW}.
See also \cite{CHZ}*{Theorem~1.5} by Cecchini, Hirsch, and Zeidler for \eqref{eq.HminEstimate} in the case $\sigma=0$.
Clearly, estimate \eqref{eq.HminEstimate} is only meaningful if the mean curvature is positive.

We prove Gromov's conjecture for spin manifolds with a constant that also depends on a lower bound for the mean curvature.
More precisely, we show the following:

\begin{theorem}
\label{thm.main}
Let $X$ be a compact Riemannian spin manifold of dimension $n+1\ge2$ with smooth boundary $\partial X=M$.
Let $\lambda\ge0$ be such that the scalar curvature of $X$ satisfies $\scal_X\ge -\lambda^2$.
Let $\eta\ge0$ be such that the mean curvature $H$ of the boundary satisfies $H\ge-\eta$.
Then we have
\[
\frac{1}{\vol(M)}\int_M H
\le 
C(M) + \sqrt{\tfrac{n}{n+1}}\,\lambda + \eta,
\]
where $C(M)$ is a constant depending only on the Riemannian spin manifold $M$.
\end{theorem}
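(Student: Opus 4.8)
The plan is to run a Dirac–operator argument on $X$ in the spirit of the estimate~\eqref{eq.HminEstimate}, but producing the full integral $\int_M H$, and to feed it a test spinor whose behaviour in the normal direction is governed by a one–dimensional variational problem that produces exactly the constant $\sqrt{n/(n+1)}$.

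\textbf{Set-up and reduction.} On $\Sigma X$ with Dirac operator $D=D_X$ I would impose the chirality (MIT--bag) boundary condition: along $M$ the bundle splits $\Sigma X|_M=\Sigma^+\oplus\Sigma^-$ into the $\pm1$--eigenbundles of the parallel involution $G$ (Clifford multiplication by $i\,\omega_{\C}\cdot\nu$, $\nu$ the outward normal), which are orthogonal, interchanged by Clifford multiplication with tangent vectors, and such that the hypersurface Dirac operator $\mathbf{D}^M$ anticommutes with $G$. For a spinor $\phi$ with $G\phi|_M=\phi|_M$ one then has $\langle\nu\cdot\phi,\phi\rangle=0$ and $\langle\mathbf{D}^M\phi,\phi\rangle=0$ pointwise on $M$, so the integrated Lichnerowicz formula, after writing $|\nabla\phi|^2=|P\phi|^2+\tfrac1{n+1}|D\phi|^2$ with $P$ the twistor operator, collapses to the Reilly-type identity
\[
\frac{n}{n+1}\int_X|D\phi|^2=\int_X|P\phi|^2+\int_X\frac{\scal_X}{4}\,|\phi|^2+\frac12\int_M H\,|\phi|^2 .
\]
(One checks the sign and the absence of further boundary terms on a Euclidean ball with a chirality twistor spinor, consistently with the paper's convention $H=n/r$.) Using $\scal_X\ge-\lambda^2$, $|P\phi|^2\ge0$, and the fact that $\langle\nu\cdot\phi,\phi\rangle=0$ forces $\|D\phi\|_{L^2(X)}^2+c^2\|\phi\|_{L^2(X)}^2=\|(D-ic)\phi\|_{L^2(X)}^2$ for $c:=\tfrac\lambda2\sqrt{\tfrac{n+1}n}$ (so $\tfrac{\lambda^2}4=\tfrac n{n+1}c^2$), this gives for every chirality spinor $\phi$ the inequality $\tfrac12\int_M H|\phi|^2\le\tfrac{n}{n+1}\|(D-ic)\phi\|_{L^2(X)}^2$. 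Choosing $\phi=\phi_\psi$ as the minimiser of $\|(D-ic)\phi\|^2$ among spinors with prescribed boundary value $\phi|_M=\psi$, $\psi$ a section of $\Sigma^+$ (then $\phi_\psi$ automatically satisfies the chirality condition and solves $(D^2+c^2)\phi_\psi=0$ with $\phi_\psi|_M=\psi$), the theorem is reduced to bounding, for a well chosen $\psi$ of norm close to $1$,
\[
\mathcal E_c(\psi):=\min\bigl\{\|D\phi\|_{L^2(X)}^2+c^2\|\phi\|_{L^2(X)}^2:\ \phi|_M=\psi\bigr\}
\]
from above by $\tfrac{n+1}{2n}\bigl(C(M)+\sqrt{\tfrac n{n+1}}\lambda\vol(M)\bigr)$, since then $\int_M H|\phi_\psi|^2\le\tfrac{2n}{n+1}\mathcal E_c(\psi)$.

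\textbf{The sharp constant and the intrinsic term.} The model here is hyperbolic space of scalar curvature $-\lambda^2$, whose horospheres have mean curvature exactly $\sqrt{n/(n+1)}\,\lambda$; spinorially this is reflected by the one–dimensional minimisation
\[
\min\Bigl\{\int_0^\infty\Bigl(\tfrac n{n+1}\,\rho'(t)^2+\tfrac{\lambda^2}4\,\rho(t)^2\Bigr)\,dt\ :\ \rho(0)=1\Bigr\}=\tfrac\lambda2\sqrt{\tfrac n{n+1}},
\]
attained by $\rho(t)=e^{-ct}$. As a competitor for $\mathcal E_c(\psi)$ I would take $\phi=\rho(\operatorname{dist}(\cdot,M))\,\widehat\psi$ with $\widehat\psi$ a norm- and energy-controlled extension of $\psi$ and $\rho$ a (cut-off of the) exponential profile; the coarea formula turns $\int_X|\nabla(\rho\circ\operatorname{dist})|^2$ and $\int_X\rho^2$ into $\int_0^\infty\rho'^2V(t)\,dt$ and $\int_0^\infty\rho^2V(t)\,dt$ with $V(t)=\vol\{\operatorname{dist}(\cdot,M)=t\}$ and $V(0)=\vol(M)$, so the leading contribution is $\sqrt{n/(n+1)}\,\lambda\,\vol(M)$. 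The remaining terms — the Dirac energy of $\widehat\psi$ and the curvature of the level sets — must be absorbed into the purely intrinsic constant $C(M)$, which I expect to be a multiple of $\vol(M)$ divided by the hyperspherical radius of $M$: this is where the author's upper Dirac eigenvalue estimate \cite{Baer} enters, bounding the relevant boundary spinor energy by intrinsic data of $M$ alone. Finally, the term $\eta\,\vol(M)$ arises when one passes from $\int_M H|\phi|^2$ back to $\int_M H$: the optimal test spinor need not have constant norm on $M$ (indeed it does not, already on the round ball, and for small $n$ there may be a genuine topological obstruction to a unit section of $\Sigma^+$), and closing this gap uses precisely the lower bound $H\ge-\eta$.

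\textbf{Main obstacle.} The crux is that the interior geometry of $X$ is essentially uncontrolled — only a lower scalar curvature bound is available — so the level sets of $\operatorname{dist}(\cdot,M)$ can be thin, can pinch at focal points, or can carry large volume, and there is no a priori collar of definite size. The heart of the proof must therefore be that $\mathcal E_c(\psi)$, equivalently the Dirichlet-to-Neumann energy of $D_X^2+c^2$ on $X$, is bounded above by data of $M$ and of $c$ only, uniformly over all admissible fill-ins. I would attack this by coupling the coarea identity above with a Riccati comparison for the mean curvature of the level sets of $\operatorname{dist}(\cdot,M)$: the bound $H\ge-\eta$ controls this mean curvature at $t=0$, $\scal_X\ge-\lambda^2$ feeds into its evolution, and near a focal point the collapse of the level sets makes the cut-off error in $\rho$ negligible, so that $\int_0^\infty\rho'(t)^2V(t)\,dt$ stays controlled against the rapidly decaying weight $\rho'^2$. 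Making this mechanism quantitative, and dovetailing it with the algebraic identities so that exactly the constant $\sqrt{n/(n+1)}$ survives, is where the real work lies.
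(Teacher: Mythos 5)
Your proposal correctly identifies the spinorial Reilly/Weitzenb\"ock framework and the origin of the constant $\sqrt{n/(n+1)}$ (the Friedrich-type modification of the connection, equivalently the twistor decomposition), but it has two genuine gaps, and the first one is fatal to the route you sketch. You reduce everything to bounding $\mathcal E_c(\psi)$ from above by intrinsic data of $M$, and you propose to do this with an explicit competitor $\rho(\operatorname{dist}(\cdot,M))\,\widehat\psi$ controlled by a Riccati comparison for the level sets of the distance function. This cannot work: the only interior hypothesis is $\scal_X\ge-\lambda^2$, which gives no control on the Riccati equation for the shape operator of the level sets, on the volumes $V(t)$, or on the focal/cut locus --- for that you would need at least a Ricci lower bound, and even then $V(t)$ is not bounded by boundary data. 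The paper avoids estimating any interior quantity. Instead of prescribing the \emph{full} Dirichlet trace $\phi|_M=\psi$ and minimizing, it solves the genuinely elliptic half-Dirichlet problem $(D-ic)\Phi=0$, $P^+(\Phi|_M)=P^+\phi$ (Proposition~\ref{prop.BVP2}), and then controls the \emph{unprescribed} half of the boundary trace by the Green's-formula identity \eqref{eq.P+P-}, which yields $\|P^-(\Phi|_M)\|_{L^2(M)}\le\|P^+(\Phi|_M)\|_{L^2(M)}$ for $c\ge0$ (Lemma~\ref{lem.BVP1}). All boundary terms in the Reilly-type inequality --- the $D_M$ cross term, the $\lambda$ term, and the $\eta$ term --- are then bounded purely by the prescribed data $P^+\phi$ and by $\|D_M\phi\|_{L^2(M)}$, with no reference to the geometry of the interior. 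This is the idea your sketch is missing.

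The second gap is the passage from $\int_M H\,|\phi|^2$ to $\int_M H$. You correctly note that a unit-norm section of $\Sigma^+M$ need not exist, but the lower bound $H\ge-\eta$ does not close this gap (in the paper $\eta$ enters only to discard the $H\,|P^-\Phi_j|^2$ terms). What is actually needed is a finite family $\phi_1,\dots,\phi_m$ of sections of the intrinsic bundle $E$ with $\sum_j|P^+\phi_j|^2\equiv1$ pointwise for \emph{every} half-rank subbundle $\Sigma^+M\subset E$ --- necessary because the splitting \eqref{eq.splitting} depends on the fill-in $X$ and is not known in advance. This is the partition-of-unity construction of Proposition~\ref{prop.sections}, and it is also what makes the constant $C(M)=4\sum_j\|D_M\phi_j\|_{L^2(M)}\|\phi_j\|_{L^2(M)}$ intrinsic; it is not the hyperspherical-radius quantity you anticipate (that route only bounds $H_{\min}$, not the total mean curvature).
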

In Example~\ref{ex.balls}, we choose $X$ to be a ball in a space form and see that the term $\sqrt{\tfrac{n}{n+1}}\,\lambda$ gives the precise asymptotics for $\lambda\to\infty$.
Thus, the coefficient $\sqrt{\tfrac{n}{n+1}}$ is optimal.

Most previously known results require the assumption that $H\ge0$ or even $H>0$.
The case where $M$ is diffeomorphic to a sphere with $\lambda=0$, $H>0$, and $X$ spin has been treated by Shi, Wang, and Wei in \cite{SWW}*{Theorem~4.1}.
Mantoulidis and Miao investigated in \cite{MM} the case where $n=2$ and $M$ has arbitrary topology, again if $\lambda=0$ and $H>0$.
In these results, the constant $C(M,\lambda)$ is not explicit.

Using entirely different surgery and deformation methods, Frenck, Hanke, and Hirsch have presented an independent proof of Gromov's conjecture with lower mean curvature bound in \cite{FHH}.
Their estimate is of the form
$$
\frac{1}{\vol(M)}\int_M H
\le 
C(M)(1 + \lambda + \eta)
$$
which also gives linear growth in $\lambda$ and $\eta$, but with unknown coefficients.
Their approach has the advantage that it allows one to drop the spin assumption if $n\le6$ and $\eta=0$.
The dimension restriction is due to the use of the hyperbolic positive mass theorem.

If we make additional assumptions on $M$, the constant $C(M)$ can be made explicit.
For $\kappa\in\R$, let $\Mk$ be the $(n+1)$-dimensional simply connected space form of constant sectional curvature $\kappa$.
For $\kappa=0$, this is the Euclidean space $\R^{n+1}$, for $\kappa=1$ the round unit sphere $S^{n+1}$, and for $\kappa=-1$ the hyperbolic space $\mathbb{H}^{n+1}$.

\begin{theorem}
\label{thm.hypersurface}
In addition to the assumptions in Theorem~\ref{thm.main}, assume that $M$ is isometric to an oriented hypersurface of $\Mk$ where $\kappa\ge0$.
Denote the (unnormalized) mean curvature of this embedding by $H_0$.
Then we have:
\begin{equation*}
\frac{1}{\vol(M)}\int_M H
\le 
2\,\sqrt{\frac{1}{\vol(M)}\int_M H_0^2} +  2n\sqrt{\kappa} + \sqrt{\tfrac{n}{n+1}}\,\lambda + \eta.
\end{equation*}
\end{theorem}

\begin{example}
The case $\kappa=\lambda=0$ and $H>0$ has been studied by Shi and Tam in \cite{ST}*{Theorem~4}.
Our estimate states that
\begin{equation}
\frac{1}{\vol(M)}\int_M H
\le 
2\,\sqrt{\frac{1}{\vol(M)}\int_M H_0^2}.
\label{eq.ShiTamNonconvex}
\end{equation}
while Shi and Tam proved the sharper estimate
\[
\int_M H
\le 
\int_M H_0
\]
under the additional assumption that each connected component of $M$ is a \emph{strictly convex} hypersurface in $\R^{n+1}$.
This strict convexity has been relaxed by Eichmair, Miao, and Wang in \cite{EMW} to require the components to have non-negative scalar curvature and admit embeddings as star-shaped hypersurfaces.

We do not need any convexity assumption on $M$.
In particular, the components of $M$ need not be diffeomorphic to a sphere.
Moreover, we only need $H\ge0$ rather than $H>0$ to obtain \eqref{eq.ShiTamNonconvex}.
\end{example}

\begin{theorem}
\label{thm.eigenvalue}
In addition to the assumptions in Theorem~\ref{thm.main}, assume that $M$ is isometric to one of the following:
\begin{itemize}
\item a flat torus,
\item a round sphere,
\item round projective space $\R P^n$ with $n\equiv 3 \mod 4$.
\end{itemize}
Denote the eigenvalue of the Dirac operator of $M$ of smallest modulus by $\mu_1$.
Then we have:
\begin{equation*}
\frac{1}{\vol(M)}\int_M H
\le 
4\,|\mu_1| + \sqrt{\tfrac{n}{n+1}}\,\lambda + \eta.
\end{equation*}
\end{theorem}

The Dirac eigenvalue $\mu_1$ is explicitly known in the cases mentioned above.
If $M$ is a round sphere or a round projective space of sectional curvature $\kappa$, then $|\mu_1| = \frac{n}{2}\sqrt{\kappa}$.
If $M$ is a flat torus, then $|\mu_1|$ can be expressed in terms of lattice data; see \cite{Fr}*{Satz~2} for details.
The $n$-dimensional torus has $2^n$ spin structures, and for one of them we have that $\mu_1=0$.
This is precisely the spin structure for which the torus does not bound as a spin manifold.

Wang has proved an estimate for flat tori in terms of a spin systole in \cite{W}*{Theorem~1.6} under the assumption that $\lambda=\sqrt{n(n+1)}$, $H>0$, $2\le n\le6$, and $X$ is diffeomorphic to a $2$-disk times a torus.

The paper is structured as follows:
In Section~\ref{sec.BVP} we prove the well-posedness of a boundary value problem for the Dirac operator.
This may be of independent interest.
Section~\ref{sec.prep} contains some technical preparation.
The next three sections contain the proofs of the theorems.
The main argument is contained in Section~\ref{sec.proof}.
The proofs of Theorems~\ref{thm.hypersurface} and \ref{thm.eigenvalue} improve the constant $C(M)$ in the special situation under consideration and make it explicit.
The final section contains the example of balls in a space form and some concluding remarks.

\medskip

\emph{Acknowledgments:} 
This work was supported by the Deutsche Forschungsgemeinschaft (DFG, German Research Foundation) -- project ID 569831821.

\section{A boundary value problem}
\label{sec.BVP}

Let $X$ be a compact connected Riemannian spin manifold of dimension $n+1\ge2$ with boundary $\partial X=M$. 
Let $M$ carry the metric and the spin structure induced from $X$.
The spinor bundle $\Sigma X$ of $X$ is a Hermitian vector bundle with metric connection.
We choose the convention that the fiberwise scalar product of $\Sigma X$ is antilinear in the first argument and linear in the second argument.
Let $\gamma\colon TX \to \End(\Sigma X)$ denote the Clifford multiplication.
It satisfies the Clifford relations
\[
\gamma(v)\gamma(w) + \gamma(w)\gamma(v) = -\< v,w\> \id_{\Sigma X}
\]
for all tangent vectors $v,w \in T_pX$ and $p\in X$.
The Dirac operator of $X$ is denoted by $D$.

Denote the inward unit normal vector field along the boundary by $\nu$.
Then 
\[
s := i\gamma(\nu) \in \End(\Sigma X)
\]
is a self-adjoint involution.
It anticommutes with Clifford multiplication by tangent vectors of $M$.
Hence, the eigenspaces corresponding to the eigenvalues $\pm 1$ of $s$ have the same dimension.
We obtain an orthogonal vector bundle decomposition
\begin{equation}
\Sigma X|_M = \Sigma^+ M \oplus \Sigma^-M,
\label{eq.splitting}
\end{equation}
where $s$ acts on $\Sigma^\pm M$ as $\pm 1$.
We denote the orthogonal projections onto these subbundles by
\[
P^\pm \colon \Sigma X|_M \to \Sigma^\pm M. 
\]

If $n$ is even, then $\Sigma X|_M$ can be canonically identified with the spinor bundle $\Sigma M$ of $M$.
If $n$ is odd, then $\Sigma X|_M$ can be canonically identified with $\Sigma M\oplus \Sigma M$ (which is not the same decomposition as the splitting in \eqref{eq.splitting}).
Denoting the intrinsic Dirac operator of $M$ by $\tilde{D}_M$ and using these identifications, we set $D_M := \tilde{D}_M$ if $n$ is even and $D_M = \begin{pmatrix}
\tilde{D}_M & 0 \\ 0 & -\tilde{D}_M\end{pmatrix}$ if $n$ is odd.
Then $D_M$ anticommutes with $s$ in both cases (see \cite{Baer3}*{Proposition~2.3}) and hence interchanges the subbundles $\Sigma^+M$ and $\Sigma^-M$.
Moreover, by \cite{Baer3}*{Proposition~2.2}, we have the relation
\begin{equation}
\nabla_\nu
=
-\gamma(\nu)D - D_M + \tfrac12 H .
\label{eq.DiracRel}
\end{equation}
This implies, in particular, that $D_M$ is an adapted boundary operator for $D$ in the sense of \cite{BB}.

We denote by $L^2(X,E)$ the space of square-integrable sections of a vector bundle $E$ over $X$ and by $H^\alpha(X,E)$ the $L^2$-Sobolev space of order $\alpha$, and similarly for $M$.

\begin{lemma}
\label{lem.BVP1}
Let $X$ be a compact Riemannian spin manifold with boundary $\partial X=M$.
Let $\Phi\in H^1(X,\Sigma X)$ satisfy $(D+i\lambda)\Phi=0$ where $\lambda\ge0$.
Then we have
\[
\| P^-(\Phi|_M)\|_{L^2(M)} \ge \| P^+(\Phi|_M)\|_{L^2(M)}.
\]
Similarly, if $\lambda\le0$, then 
\[
\| P^-(\Phi|_M)\|_{L^2(M)} \le \| P^+(\Phi|_M)\|_{L^2(M)}.
\]
\end{lemma}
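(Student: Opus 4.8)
The plan is to integrate the Schrödinger–Lichnerowicz–type identity for the operator $D+i\lambda$ over $X$ and read off the boundary term. Recall that $D^2 = \nabla^*\nabla + \tfrac14\scal_X$, so for $\Phi$ with $(D+i\lambda)\Phi=0$ we have, after integrating by parts over $X$,
\[
0 = \int_X \big| (D+i\lambda)\Phi\big|^2 = \int_X \Big( |\nabla\Phi|^2 + \tfrac14\scal_X|\Phi|^2 + \lambda^2|\Phi|^2 \Big) + \text{(boundary term)}.
\]
The integration by parts that turns $\int_X |D\Phi|^2$ into $\int_X(|\nabla\Phi|^2+\tfrac14\scal_X|\Phi|^2)$ produces a boundary contribution involving $\gamma(\nu)$ and $\nabla_\nu$, and the cross terms with $i\lambda$ produce a further boundary contribution because $D$ is formally self-adjoint only up to a boundary term: $\int_X\langle D\Phi,i\lambda\Phi\rangle + \int_X\langle i\lambda\Phi, D\Phi\rangle = -\int_M \langle\gamma(\nu)\Phi, i\lambda\Phi\rangle - \overline{(\cdots)}$, which one checks vanishes since $\gamma(\nu)$ is skew-adjoint and $i\lambda$ is the corresponding imaginary scalar; the key point is that the $\lambda^2|\Phi|^2$ bulk term survives. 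So the only surviving boundary term is the one from $\int_X |D\Phi|^2 = \int_X\langle D^2\Phi,\Phi\rangle + \int_M\langle\gamma(\nu)D\Phi,\Phi\rangle$, combined with $\int_X|\nabla\Phi|^2 = \int_X\langle\nabla^*\nabla\Phi,\Phi\rangle - \int_M\langle\nabla_\nu\Phi,\Phi\rangle$.

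Next I would use the boundary relation \eqref{eq.DiracRel}, namely $\nabla_\nu = -\gamma(\nu)D - D_M + \tfrac12 H$, to rewrite the boundary integrand. On $M$ we get
\[
\langle\gamma(\nu)D\Phi,\Phi\rangle - \langle\nabla_\nu\Phi,\Phi\rangle
= \langle\gamma(\nu)D\Phi,\Phi\rangle + \langle\gamma(\nu)D\Phi,\Phi\rangle + \langle D_M\Phi,\Phi\rangle - \tfrac12 H|\Phi|^2,
\]
but I must be careful: the constraint $(D+i\lambda)\Phi=0$ means $D\Phi = -i\lambda\Phi$ on all of $X$, in particular on $M$, so $\gamma(\nu)D\Phi|_M = -i\lambda\gamma(\nu)\Phi = -\lambda s\Phi$ (using $s=i\gamma(\nu)$). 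The term $\langle s\Phi,\Phi\rangle$ equals $\|P^+(\Phi|_M)\|^2 - \|P^-(\Phi|_M)\|^2$ pointwise integrated, and $\langle D_M\Phi,\Phi\rangle$ integrates to zero over the closed manifold $M$ since $D_M$ is self-adjoint there, while the $\tfrac12 H|\Phi|^2$ term I simply discard using $\scal_X\ge -\lambda^2$ is not needed here — actually I would keep track and find the identity reduces to
\[
0 = \int_X\Big(|\nabla\Phi|^2 + \tfrac14(\scal_X+4\lambda^2)|\Phi|^2\Big) \;+\; (\text{a positive multiple of }\lambda)\big(\|P^+(\Phi|_M)\|^2 - \|P^-(\Phi|_M)\|^2\big).
\]
Hmm — the relative coefficient of $\lambda$ here will only be consistent with the desired inequality if the bulk term has the right sign, which it does precisely when $\scal_X\ge -4\lambda^2$; but we only assumed $\scal_X\ge-\lambda^2$, which is stronger, so that is fine. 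Since $\lambda\ge0$ and the bulk integral is non-negative, we conclude $\|P^+(\Phi|_M)\|^2 - \|P^-(\Phi|_M)\|^2 \le 0$, which is the first claim; for $\lambda\le0$ the sign of the boundary coefficient flips and we get the reverse inequality.

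The main obstacle I anticipate is bookkeeping all the boundary terms correctly — in particular making sure that the $D_M$-contribution genuinely integrates to zero (this uses that $\Phi|_M\in H^{1/2}$, that $D_M$ is self-adjoint on the closed manifold $M$, and the anticommutation of $D_M$ with $s$), and pinning down the exact sign and magnitude of the coefficient of $\lambda$ multiplying $\|P^+\|^2-\|P^-\|^2$. A secondary technical point is justifying the integration by parts at the stated regularity $\Phi\in H^1(X,\Sigma X)$: one would either invoke a density/trace argument or cite the general framework for boundary value problems of Dirac-type operators in \cite{BB}, using that $D_M$ is an adapted boundary operator as noted after \eqref{eq.DiracRel}. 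Once the identity is established the conclusion is immediate from $\lambda\ge 0$ (resp.\ $\lambda\le 0$) and positivity of the bulk integrand, with no optimization or further estimate required.
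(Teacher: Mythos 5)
The lemma has no scalar-curvature hypothesis at all, and that is precisely where your approach breaks down: you plan to invoke the Schrödinger--Lichnerowicz formula and then need $\scal_X + 4\lambda^2 \ge 0$ to discard the bulk term, writing ``but we only assumed $\scal_X\ge-\lambda^2$''. No such assumption appears in the lemma; you have imported it from the main theorem. The lemma is a purely formal statement about solutions of $(D+i\lambda)\Phi=0$ on an arbitrary compact spin manifold with boundary, and the paper proves it with a \emph{single} application of Green's formula for the Dirac operator (the paper's \cite{BB}*{Lemma~2.6}): starting from $0=\int_X\<(D+i\lambda)\Phi,\Phi\>-\int_X\<\Phi,(D+i\lambda)\Phi\>$, one integration by parts produces the boundary term $-\int_M\<\gamma(\nu)\Phi,\Phi\> = -i(\|P^+\Phi\|_{L^2(M)}^2-\|P^-\Phi\|_{L^2(M)}^2)$ and the bulk term $-2i\lambda\|\Phi\|_{L^2(X)}^2$, giving the exact identity $\|P^-\Phi\|_{L^2(M)}^2=\|P^+\Phi\|_{L^2(M)}^2+2\lambda\|\Phi\|_{L^2(X)}^2$, from which both inequalities are immediate. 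No Weitzenböck formula, no curvature, no boundary operator $D_M$.

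Beyond the unneeded curvature hypothesis, two of your bookkeeping claims are wrong and would derail the computation even if a curvature bound were available. First, the cross terms $\int_X\<D\Phi,i\lambda\Phi\>+\int_X\<i\lambda\Phi,D\Phi\>$ do \emph{not} vanish; Green's formula gives exactly $-i\lambda\int_M\<\gamma(\nu)\Phi,\Phi\>$, which (after $s=i\gamma(\nu)$) is $\lambda\bigl(\|P^+\Phi\|^2-\|P^-\Phi\|^2\bigr)$ --- in fact this is the single term that carries the whole content of the lemma. Second, the $D_M$-contribution on the boundary does \emph{not} integrate to zero: self-adjointness of $D_M$ on the closed manifold $M$ only implies $\int_M\<D_M\Phi,\Phi\>$ is real, not that it vanishes; since $D_M$ anticommutes with $s$ it equals $2\Re\int_M\<D_MP^+\Phi,P^-\Phi\>$, a genuinely nonzero quantity (it is precisely the term that produces $C(M)$ in the proof of the theorem). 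You also silently drop the mean-curvature term $\tfrac12 H|\Phi|^2$ coming from \eqref{eq.DiracRel}. As a result, the ``identity'' you write down at the end is not correct, and the argument does not close. The structural misconception is treating Lemma~\ref{lem.BVP1} as a curvature estimate; it is really a sign observation about the boundary pairing for the non-self-adjoint operator $D+i\lambda$, and Green's formula alone suffices.
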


\begin{proof}
By Green's formula for the Dirac operator (see e.g.~\cite{BB}*{Lemma~2.6}) we have
\begin{align*}
0 
&=
\int_X \< (D+i\lambda)\Phi, \Phi\>  - \int_X \< \Phi, (D+i\lambda)\Phi\>  \\
&=
\int_X \< \Phi, (D-i\lambda)\Phi\>  - \int_M \< \gamma(\nu)\Phi, \Phi\> - \int_X \< \Phi, (D+i\lambda)\Phi\>  \\
&= 
- \int_M \< \gamma(\nu)\Phi, \Phi\> - 2i\lambda \|\Phi\|_{L^2(X)}^2 \\
&= 
-i \int_M \< s\,\Phi, \Phi\>  - 2i\lambda \|\Phi\|_{L^2(X)}^2 \\
&=
-i \int_M  \< P^+\Phi - P^-\Phi, P^+\Phi + P^-\Phi\>   - 2i\lambda \|\Phi\|_{L^2(X)}^2 \\
&=
-i\Big(\|P^+\Phi\|_{L^2(M)}^2 - \|P^-\Phi\|_{L^2(M)}^2 + 2\lambda \|\Phi\|_{L^2(X)}^2\Big).
\end{align*}
This implies
\begin{equation}
\|P^-\Phi\|_{L^2(M)}^2 
= 
\|P^+\Phi\|_{L^2(M)}^2  + 2\lambda \|\Phi\|_{L^2(X)}^2 ,
\label{eq.P+P-}
\end{equation}
which proves the lemma.
\end{proof}

As a consequence we find the well-posedness of the following boundary value problem:

\begin{proposition}
\label{prop.BVP2}
Let $X$ be a compact connected Riemannian spin manifold with boundary $\partial X=M$ and let $\lambda\ge0$.
For each $\phi\in H^{\nicefrac12}(M,\Sigma^+M)$ and $\Psi\in L^2(X, \Sigma X)$ there exists a unique $\Phi\in H^1(X,\Sigma X)$ such that
\[
(D-i\lambda)\Phi=\Psi \quad \text{and} \quad P^+(\Phi|_M) = \phi.
\]
\end{proposition}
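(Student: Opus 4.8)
The plan is to recognize this as an elliptic boundary value problem in the sense of B\"ar--Ballmann \cite{BB} and to combine their Fredholm theory with the energy identity \eqref{eq.P+P-} established in the proof of Lemma~\ref{lem.BVP1}.

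First I would fix the relevant boundary condition. Since $D_M$ is an adapted boundary operator for $D$, the closed subspace $B:=H^{\nicefrac12}(M,\Sigma^-M)=\ker\big(P^+\colon H^{\nicefrac12}(M,\Sigma X|_M)\to H^{\nicefrac12}(M,\Sigma^+M)\big)$ is a boundary condition for $D$; because $s=i\gamma(\nu)$ anticommutes with Clifford multiplication by tangent vectors of $M$, the local boundary condition defined by the bundle projection $P^-$ is elliptic (see \cite{BB}; see also \cite{Baer3}). Adding the bounded zeroth-order term $-i\lambda$ does not affect ellipticity, so
\[
(D-i\lambda)_B\colon\{\Phi\in H^1(X,\Sigma X): P^+(\Phi|_M)=0\}\longrightarrow L^2(X,\Sigma X)
\]
is a Fredholm operator. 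By \cite{BB} its $L^2$-adjoint is $(D+i\lambda)_{B^{\mathrm{ad}}}$, where $B^{\mathrm{ad}}$ consists of those $v$ with $\int_M\langle\gamma(\nu)w,v\rangle=0$ for all $w\in B$. Since $\gamma(\nu)=-is$ acts as multiplication by $i$ on $\Sigma^-M$, we have $\gamma(\nu)B=H^{\nicefrac12}(M,\Sigma^-M)$, and hence $B^{\mathrm{ad}}=H^{\nicefrac12}(M,\Sigma^+M)=\ker(P^-|_{H^{\nicefrac12}})$; this condition is elliptic as well, so $(D+i\lambda)_{B^{\mathrm{ad}}}$ is Fredholm too.

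Next I would show that both $(D-i\lambda)_B$ and $(D+i\lambda)_{B^{\mathrm{ad}}}$ are injective. If $(D-i\lambda)\Phi=0$ and $P^+(\Phi|_M)=0$, then $\Phi$ solves $(D+i(-\lambda))\Phi=0$, and the computation in the proof of Lemma~\ref{lem.BVP1} with $-\lambda$ in place of $\lambda$ gives
\[
\|P^-(\Phi|_M)\|_{L^2(M)}^2=\|P^+(\Phi|_M)\|_{L^2(M)}^2-2\lambda\|\Phi\|_{L^2(X)}^2=-2\lambda\|\Phi\|_{L^2(X)}^2\le 0,
\]
hence $\Phi|_M=0$. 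Likewise, if $(D+i\lambda)\Psi=0$ and $P^-(\Psi|_M)=0$, then \eqref{eq.P+P-} forces $0=\|P^+(\Psi|_M)\|_{L^2(M)}^2+2\lambda\|\Psi\|_{L^2(X)}^2$ and again $\Psi|_M=0$. In either case $\nabla_\nu$ of the spinor also vanishes along $M$ by \eqref{eq.DiracRel}, so the spinor vanishes identically by the weak unique continuation property of Dirac-type operators (using that $X$ is connected). Thus both Fredholm operators have trivial kernel; being mutually adjoint, $\operatorname{coker}\big((D-i\lambda)_B\big)\cong\ker\big((D+i\lambda)_{B^{\mathrm{ad}}}\big)=0$, so $(D-i\lambda)_B$ is bijective.

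Finally I would deduce the proposition. Given $\phi$ and $\Psi$, I pick $\Phi_0\in H^1(X,\Sigma X)$ with $\Phi_0|_M=\phi$ (trace theorem); since $\phi\in\Sigma^+M$ we have $P^+(\Phi_0|_M)=\phi$. By the bijectivity just proved there is a unique $\Phi_1$ with $(D-i\lambda)\Phi_1=\Psi-(D-i\lambda)\Phi_0$ and $P^+(\Phi_1|_M)=0$, and $\Phi:=\Phi_0+\Phi_1$ solves $(D-i\lambda)\Phi=\Psi$, $P^+(\Phi|_M)=\phi$; uniqueness follows by applying the injectivity of $(D-i\lambda)_B$ to the difference of two solutions. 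I expect the only genuinely delicate point to be the first step---invoking the B\"ar--Ballmann framework for $D-i\lambda$, verifying ellipticity of $B$, and correctly identifying $B^{\mathrm{ad}}$---after which the argument is a routine combination of \eqref{eq.P+P-}, unique continuation, and elementary Fredholm theory.
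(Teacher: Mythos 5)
Your proposal is correct and follows essentially the same route as the paper: $\infty$-regular ellipticity of the local boundary conditions $P^\pm$ in the B\"ar--Ballmann framework gives Fredholmness, the identity \eqref{eq.P+P-} plus unique continuation gives injectivity of both the operator and its adjoint $(D+i\lambda)$ with the $P^-$ condition, and surjectivity follows by duality. The only cosmetic difference is that you work with the homogeneous realization $(D-i\lambda)_B$ and lift the boundary datum $\phi$ by the trace theorem, whereas the paper treats the inhomogeneous operator $(D-i\lambda)\oplus P^+(\cdot|_M)\colon H^1\to L^2\oplus H^{\nicefrac12}$ directly; the two formulations are equivalent.
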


\begin{proof}
Both boundary conditions $P^+(\Phi|_M)=0$ and $P^-(\Phi|_M)=0$ are $\infty$-regular elliptic in the sense of \cite{BB}.
Therefore the four operators
\begin{equation}
(D\pm i\lambda) \oplus P^\pm(\cdot|_M)\colon H^1(X,\Sigma X) \to L^2(X,\Sigma X) \oplus H^{\nicefrac12}(M,\Sigma^\pm M)
\label{eq.BVPinhomog}
\end{equation}
are Fredholm.
If $\Phi\in H^1(X,\Sigma X)$ lies in the kernel of $(D - i\lambda) \oplus P^+(\cdot|_M)$, then $(D - i\lambda)\Phi=0$ and $P^+(\Phi|_M)=0$.
By Lemma~\ref{lem.BVP1}, we then also have $P^-(\Phi|_M)=0$.
Hence, $\Phi|_M=0$.
By the unique continuation property for Dirac-type operators, we conclude $\Phi=0$ (see e.g.~the proof of \cite{BBHW}*{Corollary~B.2})\footnote{This also follows from \eqref{eq.P+P-} unless $\lambda=0$.}.
Thus, the operator 
\begin{equation}
(D - i\lambda) \oplus P^+(\cdot|_M)\colon H^1(X,\Sigma X) \to L^2(X,\Sigma X) \oplus H^{\nicefrac12}(M,\Sigma^+ M)
\label{eq.BVPinhomog+}
\end{equation}
is injective.
The adjoint boundary value problem is given by
\begin{equation*}
(D + i\lambda) \oplus P^-(\cdot|_M)\colon H^1(X,\Sigma X) \to L^2(X,\Sigma X) \oplus H^{\nicefrac12}(M,\Sigma^- M)
\end{equation*}
which, by the same reasoning, is also injective.
Therefore, the operator in \eqref{eq.BVPinhomog+} is surjective, and hence an isomorphism.
\end{proof}

\section{Some preparation}
\label{sec.prep}

For the proof of the theorem we need some technical preparation.
Only Proposition~\ref{prop.sections} will be needed later on.

\begin{lemma}
\label{lem.ONB}
Let $V$ be a finite-dimensional Euclidean or unitary vector space.
Let $e_1,...,e_n$ be an orthonormal basis of $V$.
Let $U\subset V$ be a $k$-dimensional subspace and let $P\colon V\to U$ be the orthogonal projection onto $U$.
Then we have:
\[
\sum_{j=1}^n |Pe_j|^2 = k.
\]
\end{lemma}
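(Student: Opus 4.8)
The plan is to diagonalize the self-adjoint operator $P$ and count dimensions. Since $P$ is the orthogonal projection onto the $k$-dimensional subspace $U$, it is self-adjoint and idempotent, so its eigenvalues are $1$ (with multiplicity $k$, eigenspace $U$) and $0$ (with multiplicity $n-k$, eigenspace $U^\perp$). Hence the trace of $P$, computed as the sum of its eigenvalues, equals $k$.

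The key step is then to recognize that $\sum_{j=1}^n |Pe_j|^2$ is nothing but $\operatorname{tr}(P)$, and that the trace is basis-independent. Concretely, for each $j$ we have $|Pe_j|^2 = \< Pe_j, Pe_j\> = \< e_j, P^*Pe_j\> = \< e_j, Pe_j\>$, using that $P^* = P$ and $P^2 = P$. Summing over the orthonormal basis $e_1,\dots,e_n$ gives $\sum_{j=1}^n \< e_j, Pe_j\> = \operatorname{tr}(P)$, which we already computed to be $k$. In the unitary case one should note that $\< e_j, Pe_j\>$ is automatically real (it equals $|Pe_j|^2 \ge 0$), so there is no issue with the convention that the inner product is antilinear in the first slot.

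I do not expect any real obstacle here: the only mild subtlety is bookkeeping about which argument of the Hermitian product is linear, but since every term in sight is a nonnegative real number this causes no trouble. Alternatively, and perhaps cleanest for exposition, one can sidestep traces entirely: pick an orthonormal basis $f_1,\dots,f_k$ of $U$, extend it to an orthonormal basis $f_1,\dots,f_n$ of $V$, and compute $\sum_{j=1}^n |Pe_j|^2 = \sum_{j=1}^n \sum_{i=1}^k |\< f_i, e_j\>|^2 = \sum_{i=1}^k \sum_{j=1}^n |\< f_i, e_j\>|^2 = \sum_{i=1}^k |f_i|^2 = k$, where the middle equality is just Fubini for a finite double sum and the penultimate step is Parseval's identity for the orthonormal basis $(e_j)$. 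Either route is a few lines; I would probably present the Parseval version since it is self-contained and makes the appearance of $k = \dim U$ most transparent.
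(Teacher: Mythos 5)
Your Parseval route is essentially the paper's proof verbatim: the paper also picks an orthonormal basis $u_1,\dots,u_k$ of $U$, writes $|Pe_j|^2 = \sum_{m=1}^k |\<e_j,u_m\>|^2$, swaps the order of summation, and applies Parseval (Bessel for the full basis $(e_j)$) to get $\sum_m |u_m|^2 = k$. Your trace formulation is a correct and equally short repackaging of the same computation — $\sum_j \<e_j, Pe_j\> = \operatorname{tr}(P) = \dim U$ using $P^*P = P$ — but offers no additional generality here, so presenting the Parseval version, as you suggest, matches the paper exactly.
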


\begin{proof}
We choose an orthonormal basis $u_1,...,u_k$ of $U$.
Then we have 
\[
u_m = \sum_{j=1}^{m} \<u_m,e_j\>e_j
\quad
\text{ and hence }
\quad
|u_m|^2 = \sum_{j=1}^{k} |\<u_m,e_j\>|^2.
\]
Similarly, we have 
\[
|Pe_j|^2 = \sum_{m=1}^k |\<e_j,u_m\>|^2.
\]
We compute
\begin{align*}
\sum_{j=1}^n |Pe_j|^2
&=
\sum_{j=1}^n \sum_{m=1}^k |\<u_m,e_j\>|^2 
=
\sum_{m=1}^k |u_m|^2 
= 
k.
\qedhere
\end{align*}
\end{proof}

\begin{lemma}
\label{lem.Partition1}
Let $M$ be a manifold (possibly with boundary) and let $U_j\subset M$ be an open cover of $M$.
Then there exist smooth non-negative functions $\chi_j\in C^\infty(M,\R)$ such that $\mathrm{supp}(\chi_j)\subset U_j$ for every $j$ and $\sum_j \chi_j^2 \equiv 1$ on $M$.
\end{lemma}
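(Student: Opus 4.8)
The plan is to start from an ordinary (linear) partition of unity and then renormalize. First I would invoke the standard existence theorem for partitions of unity: since $M$ is a paracompact manifold, possibly with boundary, and $\{U_j\}$ is an open cover, there exist smooth non-negative functions $\psi_j\in C^\infty(M,\R)$ with $\mathrm{supp}(\psi_j)\subset U_j$ for all $j$, such that the family $\{\mathrm{supp}(\psi_j)\}$ is locally finite and $\sum_j\psi_j\equiv 1$ on $M$.

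Next I would set $g:=\sum_j\psi_j^2$. Because the family of supports is locally finite, near every point this is a finite sum of smooth functions, so $g\in C^\infty(M,\R)$. At each $p\in M$ we have $\sum_j\psi_j(p)=1$, so at least one value $\psi_j(p)$ is strictly positive, hence $g(p)>0$. Therefore $\sqrt{g}\in C^\infty(M,\R)$ is smooth and nowhere zero, and I can define $\chi_j:=\psi_j/\sqrt{g}$. Then $\chi_j\in C^\infty(M,\R)$ is non-negative, $\mathrm{supp}(\chi_j)=\mathrm{supp}(\psi_j)\subset U_j$, and
\[
\sum_j\chi_j^2=\frac{1}{g}\sum_j\psi_j^2=\frac{g}{g}=1
\]
on $M$, which is the claim.

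The only point requiring care — not really an obstacle — is the smoothness of the normalizing factor $\sqrt{g}$. This rests on two observations: local finiteness of the partition, which makes $g$ a locally finite sum of smooth functions and hence smooth, and pointwise positivity of $g$, which holds precisely because the $\psi_j$ sum to $1$ and so cannot all vanish at any given point. Note that the naive attempt $\chi_j:=\sqrt{\psi_j}$ fails in general, since the square root of a smooth non-negative function vanishing to first order need not be smooth; dividing by $\sqrt{g}$ instead of extracting a square root of each $\psi_j$ circumvents this issue.
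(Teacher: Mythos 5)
Your proof is correct and follows exactly the same route as the paper: take an ordinary partition of unity $\psi_j$ subordinate to the cover, set $u=\sum_j\psi_j^2$, observe that $u$ is smooth and positive, and define $\chi_j=\psi_j/\sqrt{u}$. Your additional remarks on local finiteness and on why the naive choice $\sqrt{\psi_j}$ fails are correct but not needed beyond what the paper states.
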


\begin{proof}
It is well known that there is a smooth partition of unity $\psi_j\in C^\infty(M,\R)$ subordinate to the open cover, i.e., $\psi_j\ge 0$ and $\mathrm{supp}(\psi_j)\subset U_j$ for every $j$ and $\sum_j \psi_j \equiv 1$ on $M$.
Then
\[
u 
:= 
\sum_j\psi_j^2
\]
is smooth and positive on $M$.
The functions $\chi_j := \psi_j/\sqrt{u}$ have the desired properties.
\end{proof}

\begin{proposition}
\label{prop.sections}
Let $M$ be a compact manifold (possibly with boundary) and let $E\to M$ be a Hermitian or Riemannian vector bundle.
Then there exist finitely many sections $\phi_1,...,\phi_m \in C^\infty(M,E)$ such that for each subbundle $F\subset E$ of rank $k$ we have 
\[
\sum_{j=1}^m |P\phi_j|^2 \equiv k
\]
where $P\colon E\to F$ is the orthogonal projection onto $F$.
\end{proposition}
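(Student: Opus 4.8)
The plan is to reduce the global statement to the pointwise linear-algebra identity of Lemma~\ref{lem.ONB} by gluing local orthonormal frames with a partition of unity. Since $M$ is compact, I would first choose a finite open cover $U_1,\dots,U_L$ of $M$ such that over each $U_\alpha$ the bundle $E$ is trivial and admits a smooth local orthonormal frame $e_1^\alpha,\dots,e_N^\alpha\in C^\infty(U_\alpha,E)$, where $N=\operatorname{rank}(E)$. By Lemma~\ref{lem.Partition1} there are smooth non-negative functions $\chi_\alpha\in C^\infty(M,\R)$ with $\mathrm{supp}(\chi_\alpha)\subset U_\alpha$ and $\sum_\alpha\chi_\alpha^2\equiv1$.

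Next I would define the global sections $\phi_{\alpha,i}:=\chi_\alpha\,e_i^\alpha$, extended by $0$ outside $U_\alpha$. These lie in $C^\infty(M,E)$ because $\chi_\alpha$ vanishes near the boundary of $U_\alpha$, so the product extends smoothly by zero. After relabeling, these are the desired finitely many sections $\phi_1,\dots,\phi_m$ with $m=LN$.

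Finally, fix a subbundle $F\subset E$ of rank $k$ and let $P\colon E\to F$ be the fiberwise orthogonal projection. At a point $p\in M$, a term $\phi_{\alpha,i}$ contributes only when $\chi_\alpha(p)\neq0$, in which case $p\in U_\alpha$, the vector $Pe_i^\alpha(p)$ is well defined, and $|P\phi_{\alpha,i}(p)|^2=\chi_\alpha(p)^2\,|Pe_i^\alpha(p)|^2$; when $\chi_\alpha(p)=0$ the term is simply zero. Applying Lemma~\ref{lem.ONB} to the orthonormal basis $e_1^\alpha(p),\dots,e_N^\alpha(p)$ of $E_p$ and the subspace $F_p$ gives $\sum_{i=1}^N|Pe_i^\alpha(p)|^2=k$, hence
$$\sum_{\alpha,i}|P\phi_{\alpha,i}(p)|^2=\sum_\alpha\chi_\alpha(p)^2\sum_{i=1}^N|Pe_i^\alpha(p)|^2=k\sum_\alpha\chi_\alpha(p)^2=k.$$

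I do not expect any real obstacle here; the only points requiring a moment's care are the global smoothness of the $\phi_{\alpha,i}$ (ensured by the support condition on $\chi_\alpha$) and the bookkeeping at points where some $\chi_\alpha$ vanishes, where $Pe_i^\alpha$ need not be defined but the corresponding term drops out anyway. The use of compactness is only to keep the cover, and hence the number of sections, finite.
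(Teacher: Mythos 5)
Your proof is correct and follows essentially the same route as the paper: a finite trivializing cover, local orthonormal frames, the square-summing partition of unity from Lemma~\ref{lem.Partition1}, and the pointwise identity of Lemma~\ref{lem.ONB}. The extra care you take at points where $\chi_\alpha$ vanishes (so that $Pe_i^\alpha$ need not be defined there) is a detail the paper glosses over, but nothing differs in substance.
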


\begin{proof}
We choose a finite open cover $U_j$ of $M$ such that $E|_{U_j}$ is trivial for each $j=1,...,N$.
We further choose locally defined smooth sections $e_{j,1},...,e_{j,r}$ of $E|_{U_j}$ which form an orthonormal basis at each point of $U_j$.
By Lemma~\ref{lem.Partition1} there exist smooth functions $\chi_j\in C^\infty(M,\R)$ such that $\mathrm{supp}(\chi_j)\subset U_j$ for every $j$ and $\sum_j \chi_j^2 \equiv 1$ on $M$.
The sections $\chi_j\cdot e_{j,l}$ can be extended by zero to smooth sections in $C^\infty(M,E)$.
We define the sections $\phi_j$ by 
\[
\begin{aligned}
\phi_1 &:= \chi_1\cdot e_{1,1}, &\ldots, &\quad\phi_r := \chi_1\cdot e_{1,r},\\
&\quad\vdots && \\
\phi_{r(N-1)+1} &:= \chi_N\cdot e_{N,1}, &\ldots, &\quad\phi_{rN} := \chi_N\cdot e_{N,r}.
\end{aligned}
\]
We compute, using Lemmas~\ref{lem.ONB} and \ref{lem.Partition1}:
\begin{align*}
\sum_{j=1}^{rN} |P\phi_j|^2
&=
\sum_{j=1}^N \sum_{l=1}^r |P(\chi_j e_{j,l})|^2 
=
\sum_{j=1}^N \chi_j^2 \sum_{l=1}^r |P e_{j,l}|^2 
=
\sum_{j=1}^N \chi_j^2 \cdot k
=
k.
\qedhere
\end{align*}
\end{proof}

\section{Proof of Theorem~\ref{thm.main}} 
\label{sec.proof}

In this section we carry out the proof of Theorem~\ref{thm.main}.

\subsection{Definition of the constant \emph{C}(\emph{M})}
\label{sec.CM}

Let $M$ be an $n$-dimensional closed Riemannian spin manifold.
We apply Proposition~\ref{prop.sections} to the spinor bundle $E=\Sigma M$ of $M$ if $n$ is even and to $E=\Sigma M \oplus \Sigma M$ if $n$ is odd.
We rescale the resulting smooth sections $\phi_1,...,\phi_m \in C^\infty(M,E)$ such that 
\begin{equation}
\sum_{j=1}^m |P\phi_j|^2 \equiv 1
\label{eq.Pphi}
\end{equation}
whenever $P\colon E\to F$ is the orthogonal projection onto a subbundle $F\subset E$ of half the rank of $E$.

Let $\tilde{D}_M$ be the intrinsic Dirac operator of $M$ and set $D_M := \tilde{D}_M$ if $n$ is even and $D_M = \begin{pmatrix}\tilde{D}_M & 0 \\ 0 & -\tilde{D}_M\end{pmatrix}$ if $n$ is odd.
We define
\begin{equation}
C(M) := \frac{4}{\vol(M)}\cdot\sum_{j=1}^m \|D_M\phi_j\|_{L^2(M)} \cdot \|\phi_j\|_{L^2(M)} . 
\label{eq.CM}
\end{equation}

\subsection{The estimate}
Now let $X$ be a spin fill-in of $M$, i.e., $M=\partial X$ as a Riemannian spin manifold.
Without loss of generality, we may assume that $X$ is connected, as otherwise we can apply the following argument to each connected component separately.
Connected components of $X$ with empty boundary do not contribute to the integral of the mean curvature and can thus be ignored.

We identify $E=\Sigma X|_M$ as discussed in Section~\ref{sec.BVP}.
Recall the splitting \(E = \Sigma^+ M \oplus \Sigma^-M\) in \eqref{eq.splitting} and the orthogonal projections onto these subbundles \(P^\pm \colon E \to \Sigma^\pm M\).

Let $\lambda\ge0$ be such that $\scal_X\ge -\lambda^2$ on $X$.
We apply Proposition~\ref{prop.BVP2} to $\phi=P^+\phi_j$ and $\Psi=0$ to obtain spinors $\Phi_j \in H^1(X,\Sigma X)$ with $(D-i\sqrt{\tfrac{n+1}{n}}\tfrac{\lambda}{2})\Phi_j=0$ and  $P^+(\Phi_j|_M) = P^+\phi_j$.
Since the boundary condition $P^+(\cdot|_M)$ is $\infty$-regular, these spinors are smooth, $\Phi_j\in C^\infty(X,\Sigma X)$.
The Weitzenböck formula and integration by parts give:
\begin{align*}
0
&=
\int_X \Big\<\Big(D+i\sqrt{\tfrac{n+1}{n}}\tfrac{\lambda}{2}\Big)\Big(D-i\sqrt{\tfrac{n+1}{n}}\tfrac{\lambda}{2}\Big)\Phi_j,\Phi_j\Big\> \\
&=
\int_X \<(D^2 +\tfrac{n+1}{n}\tfrac{\lambda^2}{4})\Phi_j,\Phi_j\> \\
&=
\int_X \<(\nabla^*\nabla + \tfrac{1}{4}\scal_X +\tfrac{\lambda^2}{4}+\tfrac{1}{n}\tfrac{\lambda^2}{4})\Phi_j,\Phi_j\> \\
&\ge
\int_X \<(\nabla^*\nabla + \tfrac{1}{n}\tfrac{\lambda^2}{4})\Phi_j,\Phi_j\> .
\end{align*}
We introduce a new connection $\tilde{\nabla}$ on $\Sigma X$ by setting
\[
\tilde{\nabla}_X\Phi := \nabla_X\Phi + \tfrac{i}{\sqrt{n(n+1)}}\tfrac{\lambda}{2}\gamma(X)\Phi.
\]
One easily computes that
\[
\tilde{\nabla}^*\tilde{\nabla} = \nabla^*\nabla + \tfrac{1}{n}\tfrac{\lambda^2}{4} .
\]
It follows that
\begin{align*}
0
&\ge
\int_X \<\tilde{\nabla}^*\tilde{\nabla}\Phi_j,\Phi_j\>\\
&=
\int_X |\tilde{\nabla}\Phi_j|^2 + \int_M \<\tilde{\nabla}_\nu\Phi_j,\Phi_j\> \\
&\ge
\int_M \<\tilde{\nabla}_\nu\Phi_j,\Phi_j\>\\
&=
\int_M \<\nabla_\nu\Phi_j,\Phi_j\> + \frac{1}{\sqrt{n(n+1)}}\frac{\lambda}{2} \int_M \<s\Phi_j,\Phi_j\> .
\end{align*}
Using \eqref{eq.DiracRel}, we compute along the boundary:
\begin{align*}
\nabla_\nu\Phi_j
&=
-\gamma(\nu)D\Phi_j - D_M\Phi_j + \tfrac12 H\Phi_j \\
&=
i\, s\, D\Phi_j - D_M\Phi_j + \tfrac12 H\Phi_j \\
&=
-\sqrt{\tfrac{n+1}{n}}\tfrac{\lambda}{2}\, s\, \Phi_j - D_M\Phi_j + \tfrac12 H\Phi_j.
\end{align*}
Hence, we find
\begin{align*}
0
&\ge
\int_M \Big(-\sqrt{\tfrac{n+1}{n}}\tfrac{\lambda}{2} \<s\Phi_j,\Phi_j\> - \<D_M\Phi_j,\Phi_j\> + \tfrac12 H\, |\Phi_j|^2 + \tfrac{1}{\sqrt{n(n+1)}}\tfrac{\lambda}{2}\<s\Phi_j,\Phi_j\>\Big) \\
&=
\int_M \Big(\sqrt{\tfrac{n}{n+1}}\tfrac{\lambda}{2} \<P^-\Phi_j-P^+\Phi_j,P^+\Phi_j+P^-\Phi_j\> - \<D_M\Phi_j,\Phi_j\> + \tfrac12 H\, |\Phi_j|^2 \Big) \\
&=
\int_M \Big(\sqrt{\tfrac{n}{n+1}}\tfrac{\lambda}{2} (|P^-\Phi_j|^2-|P^+\Phi_j|^2) - \<D_M\Phi_j,\Phi_j\> + \tfrac12 H\, |\Phi_j|^2 \Big) \\
&\ge
\int_M \Big(-\sqrt{\tfrac{n}{n+1}}\tfrac{\lambda}{2} |P^+\Phi_j|^2 - \<D_M\Phi_j,\Phi_j\> + \tfrac12 H\, |\Phi_j|^2 \Big) .
\end{align*}
Summation over $j=1,\ldots,m$ gives
\begin{align}
\frac12 \int_M H  
&=
\frac12 \sum_{j=1}^m\int_M H |P^+\phi_j|^2  \notag\\
&=
\frac12 \sum_{j=1}^m\int_M H |P^+\Phi_j|^2  \notag\\
&=
\frac12 \sum_{j=1}^m\int_M \Big[H |\Phi_j|^2 - H|P^-\Phi_j|^2 \Big] \notag\\\
&\le 
\sum_{j=1}^m \Big[\int_M \<D_M\,\Phi_j,\Phi_j\> + \sqrt{\frac{n}{n+1}}\frac{\lambda}{2} \int_M |P^+\Phi_j|^2- \frac12\int_MH|P^-\Phi_j|^2  \Big] .
\label{eq.intH1}
\end{align}
For the first term we find
\begin{align}
\sum_{j=1}^m \int_M \<D_M\,\Phi_j,\Phi_j\> 
&=
\sum_{j=1}^m \int_M (\<D_M\,P^+\Phi_j,P^-\Phi_j\> + \<D_M\,P^-\Phi_j,P^+\Phi_j\> )\notag\\
&=
\sum_{j=1}^m \int_M (\<D_M\,P^+\Phi_j,P^-\Phi_j\> + \<P^-\Phi_j,D_M\,P^+\Phi_j\> )\notag\\
&=
2\Re \sum_{j=1}^m \int_M \<D_M\,P^+\Phi_j,P^-\Phi_j\> \notag\\
&=
2\Re \sum_{j=1}^m \int_M \<D_M\,P^+\phi_j,P^-\Phi_j\> \notag\\
&=
2\Re \sum_{j=1}^m \int_M \<D_M\,\phi_j,P^-\Phi_j\> \notag\\
&\le 
2\sum_{j=1}^m \|D_M\phi_j\|_{L^2(M)} \cdot \|P^-\Phi_j\|_{L^2(M)}  \notag\\
&\le
2\sum_{j=1}^m \|D_M\phi_j\|_{L^2(M)} \cdot \|P^+\Phi_j\|_{L^2(M)}  \label{eq.pm}\\
&=
2\sum_{j=1}^m \|D_M\phi_j\|_{L^2(M)} \cdot \|P^+\phi_j\|_{L^2(M)}  \notag\\
&\le
2\sum_{j=1}^m \|D_M\phi_j\|_{L^2(M)} \cdot \|\phi_j\|_{L^2(M)}  \notag\\
&=
\tfrac12 C(M)\vol(M), 
\label{eq.intH2}
\end{align}
where we applied Lemma~\ref{lem.BVP1} in \eqref{eq.pm}.
For the second term \eqref{eq.intH1} we get
\begin{align}
\sum_{j=1}^m \sqrt{\frac{n}{n+1}}\frac{\lambda}{2} \int_M |P^+\Phi_j|^2 
&=
\sqrt{\frac{n}{n+1}}\frac{\lambda}{2} \sum_{j=1}^m \int_M |P^+\phi_j|^2
=
\sqrt{\frac{n}{n+1}}\frac{\lambda}{2} \vol(M).
\label{eq.intH3}
\end{align}
For the third term in \eqref{eq.intH1} we obtain, again using Lemma~\ref{lem.BVP1}:
\begin{align}
-\frac12 \sum_{j=1}^m \int_M H |P^-\Phi_j|^2
&\le
\frac{\eta}{2}\sum_{j=1}^m  \|P^-\Phi_j\|^2
\le
\frac{\eta}{2}\sum_{j=1}^m  \|P^+\Phi_j\|^2
=
\frac{\eta}{2}\sum_{j=1}^m  \|P^+\phi_j\|^2
=
\frac{\eta}{2}\vol(M).
\label{eq.intH4}
\end{align}

Combining \eqref{eq.intH1}, \eqref{eq.intH2}, \eqref{eq.intH3}, and \eqref{eq.intH4} we find
\[
\int_M H \le \bigg(C(M) + \sqrt{\frac{n}{n+1}}\lambda + \eta\bigg) \vol(M).
\]
This completes the proof of Theorem~\ref{thm.main}.

\section{Proof of Theorem~\ref{thm.hypersurface}}

In order to obtain Theorem~\ref{thm.hypersurface}, we need to refine the proof of Theorem~\ref{thm.main}, improving the constant $C(M)$ and making it explicit.
Let $h\in C^\infty(M,\R)$, let $\tau$ be a smooth field of pointwise isometries of $E$, and let $\beta\ge0$ be a constant.

\begin{lemma}
\label{lem.improve}
Assume the situation of the proof of Theorem~\ref{thm.main}.
If $\phi_1,...,\phi_m$ can be chosen such that for each $j$
\[
D_M\phi_j = \varepsilon_j h\phi_j + \beta\tau\phi_j .
\]
holds, where $\varepsilon_j\in \{-1,1\}$, then we have
\begin{align*}
\sum_{j=1}^m \int_M \<D_M\,\Phi_j,\Phi_j\> 
\le
2\,\vol(M)\Bigg(\sqrt{\frac{1}{\vol(M)}\int_M h^2} + \beta \Bigg) .
\end{align*}
\end{lemma}

\begin{proof}
As in the proof of Theorem~\ref{thm.main}, $\phi_1,...,\phi_m$ are scaled such that \eqref{eq.Pphi} holds.
Define the orthogonal projection $P_\tau := \tau^{-1} P^- \tau$.
We compute, for any constants $\alpha_1,\alpha_2>0$:
\begin{align}
\sum_{j=1}^m \int_M \<D_M\,\Phi_j,\Phi_j\> 
&=
2\Re \sum_{j=1}^m \int_M \<D_M\,\phi_j,P^-\Phi_j\> \notag\\
&=
2\Re \sum_{j=1}^m \int_M \<\varepsilon_j h\phi_j + \beta\tau\phi_j,P^-\Phi_j\> \notag\\
&=
2\Re \sum_{j=1}^m \int_M \<\varepsilon_j hP^-\phi_j + \beta P^-\tau\phi_j,P^-\Phi_j\> \notag\\
&=
2\Re \sum_{j=1}^m \int_M \<\varepsilon_j hP^-\phi_j + \beta \tau P_\tau\phi_j,P^-\Phi_j\> \notag\\
&\le
2\sum_{j=1}^m \big(\|hP^-\phi_j\|_{L^2(M)} + \beta\|\tau P_\tau\phi_j\|_{L^2(M)}\big) \|P^-\Phi_j\|_{L^2(M)} \notag\\
&=
2\sum_{j=1}^m \big(\|hP^-\phi_j\|_{L^2(M)} + \beta\|P_\tau\phi_j\|_{L^2(M)}\big) \|P^-\Phi_j\|_{L^2(M)} \notag\\
&\le
2\sum_{j=1}^m \big(\|hP^-\phi_j\|_{L^2(M)} + \beta\|P_\tau\phi_j\|_{L^2(M)}\big) \|P^+\Phi_j\|_{L^2(M)} \notag\\
&=
2\sum_{j=1}^m \big(\|hP^-\phi_j\|_{L^2(M)} + \beta\|P_\tau\phi_j\|_{L^2(M)}\big) \|P^+\phi_j\|_{L^2(M)} \notag\\
&\le
\sum_{j=1}^m \big(\alpha_1\|hP^-\phi_j\|^2 + \tfrac{1}{\alpha_1}\|P^+\phi_j\|^2  + \beta^2 \alpha_2 \|P_\tau\phi_j\|^2 + \tfrac{1}{\alpha_2}\|P^+\phi_j\|^2 \big) \notag\\
&=
\alpha_1 \int_M h^2 + (\tfrac{1}{\alpha_1} + \beta^2\alpha_2 + \tfrac{1}{\alpha_2})\vol(M) .\notag
\label{eq.hbeta}
\end{align}
Choosing $\alpha_1 = \sqrt{\vol(M)/\int_M h^2}$ and $\alpha_2 = 1/\beta$ yields\footnote{If $h$ vanishes identically, then consider the limit $\alpha_1\to\infty$.}
\begin{equation*}
\sum_{j=1}^m \int_M \<D_M\,\Phi_j,\Phi_j\> 
\le
2\sqrt{\vol(M)\int_M h^2} + 2\beta \vol(M) .
\qedhere
\end{equation*}
\end{proof}

Now we prove Theorem~\ref{thm.hypersurface}.
For $\kappa\ge0$, the spinor bundle of $\Mk$ can be trivialized by Killing spinors, i.e., spinors $\Psi_1,...,\Psi_m$ satisfying
\[
\nabla_X \Psi_j = \frac{\sqrt{\kappa}}{2}\gamma(X)\Psi_j.
\]
These Killing spinors are pointwise orthonormal if chosen appropriately.\footnote{For $\kappa<0$, the spinor bundle of $\Mk$ can also be trivialized by Killing spinors (with imaginary constant) but they are not pointwise orthonormal.}

Now let $M\subset\Mk$ be an oriented hypersurface.
We restrict the Killing spinors $\Psi_j$ to $M$ and obtain a global frame $\phi_j := \Psi_j|_M$ of the spinor bundle $E=\Sigma\Mk|_M\to M$.
We rescale this frame as in Section~\ref{sec.CM} so that \eqref{eq.Pphi} holds.
We choose one unit normal field $\nu_0$ for $M$ in $\Mk$ and denote the corresponding mean curvature by $H_0$.
Using \eqref{eq.DiracRel}, we compute
\begin{align*}
D_M \phi_j
&=
\tfrac12 H_0 \phi_j - \gamma(\nu_0) D \Psi_j -\nabla_{{\nu_0}}\Psi_j\\
&=
\tfrac12 H_0 \phi_j + (n+1) \frac{\sqrt{\kappa}}{2} \gamma(\nu_0)\Psi_j - \frac{\sqrt{\kappa}}{2}\gamma(\nu_0)\Psi_j\\
&=
\tfrac12 H_0 \phi_j + \frac{n\sqrt{\kappa}}{2} \gamma(\nu_0)\phi_j. 
\end{align*}
We apply Lemma~\ref{lem.improve} with $h=\frac{H_0}{2}$, $\varepsilon_1=\cdots=\varepsilon_m=1$, $\beta=\frac{n\sqrt{\kappa}}{2}$, and $\tau=\gamma(\nu_0)$ to obtain
\begin{align*}
\sum_{j=1}^m \int_M \<D_M\,\Phi_j,\Phi_j\> 
\le
\vol(M)\Bigg(\sqrt{\frac{1}{\vol(M)}\int_M H_0^2} + n\sqrt{\kappa} \Bigg) 
=
\tfrac12 \, C'(M) \, \vol(M),
\end{align*}
where we have put $C'(M)=2\sqrt{\frac{1}{\vol(M)}\int_M H_0^2} + 2n\sqrt{\kappa}$.
Now the proof of Theorem~\ref{thm.main} carries over with $C(M)$ replaced by $C'(M)$.
This yields Theorem~\ref{thm.hypersurface}.

\section{Proof of Theorem~\ref{thm.eigenvalue}}

For all manifolds listed in the statement of Theorem~\ref{thm.eigenvalue}, the eigenspace of $\tilde{D}_M^2$ for the smallest eigenvalue $\mu_1^2$ has a basis consisting of pointwise orthonormal eigenspinors, see \cite{Fr} for the case of flat tori and \cite{Baer1}*{Section~4} for the spherical cases.
This eigenspace is the orthogonal sum of the eigenspaces of $\tilde{D}_M$ itself for the eigenvalues $\mu_1$ and $-\mu_1$.
Thus, the pointwise orthonormal basis can be chosen to consist of eigenspinors of $\tilde{D}_M$.
This yields a pointwise orthonormal basis $\phi_1,...,\phi_m$ of $E$, consisting of eigensections of $D_M$ for the eigenvalues $\pm\mu_1$ by doubling if $n$ is odd.

We rescale this basis as in Section~\ref{sec.CM} so that \eqref{eq.Pphi} holds.
We apply Lemma~\ref{lem.improve} with $h=\mu_1$ and $\beta=0$ to obtain
\begin{align*}
\sum_{j=1}^m \int_M \<D_M\,\Phi_j,\Phi_j\> 
\le
2\,\vol(M) \sqrt{\frac{1}{\vol(M)}\int_M \mu_1^2}
=
2\,|\mu_1|\,\vol(M) 
=
\tfrac12 C''(M)\,\vol(M),
\end{align*}
where $C''(M)=4|\mu_1|$.
Now the proof of Theorem~\ref{thm.main} carries over with $C(M)$ replaced by $C''(M)$.
This yields Theorem~\ref{thm.eigenvalue}.

\section{Concluding remarks}

The dependence on the lower scalar curvature bound is very explicit and appears to be optimal as the following example shows:

\begin{example}
\label{ex.balls}
Let $X\subset \Mk$ be a compact ball where the radius is chosen such that $M=\partial X$ is the standard unit sphere.
This is possible if $\kappa\le1$.
The mean curvature of $M$ is given by $H = n\sqrt{1-\kappa}$.
For negative $\kappa$ we set $n(n+1)\kappa = -\lambda^2$ with $\lambda > 0$.
Then $\scal_X = -\lambda^2$ and we have
\[
\frac{1}{\vol(M)}\int_M H 
= 
n\cdot \sqrt{1+ \tfrac{\lambda^2}{n(n+1)}}
=
\sqrt{\tfrac{n^2}{\lambda^2}+ \tfrac{n}{n+1}}\cdot\lambda 
\sim
\sqrt{\tfrac{n}{n+1}}\cdot\lambda \quad\text{ as }\lambda\to\infty.
\]
This shows that the dependence on $\lambda$ in our estimate is of the right order for large $\lambda$.
\end{example}

It seems strange that one needs a lower bound on the mean curvature in order to obtain an upper bound on its integral.
It is unclear to the author whether this dependence can be removed.
Note that the Gauss-Bonnet argument for $\dim(M)=1$ and $\lambda=0$ in the introduction does not need a lower bound on $H$.

\begin{remark}
The definition of the constant $C(M)$ in Section~\ref{sec.CM} depends on the spin structure of $M$.
However, since $M$ is compact, it has only finitely many spin structures.
Thus, $C(M)$ can be chosen independently of the spin structure.
\end{remark}


\begin{bibdiv}
\begin{biblist}

\bib{Baer1}{article}{
   author={Bär, Christian},
   title={The Dirac operator on space forms of positive curvature},
   journal={J. Math. Soc. Japan},
   volume={48},
   date={1996},
   number={1},
   pages={69--83},
   issn={0025-5645},
   zbl={0848.58046},
   mr={1361548},
   doi={10.2969/jmsj/04810069},
}

\bib{Baer3}{article}{
 author={Bär, Christian},
 issn={1016-443X},
 issn={1420-8970},
 doi={10.1007/BF02246994},
 zbl={0867.53037},
 mr={1421872},
 title={Metrics with harmonic spinors},
 journal={Geom. Funct. Anal.},
 volume={6},
 number={6},
 pages={899--942},
 date={1996},
 publisher={Springer (Birkhäuser), Basel},
 eprint={https://eudml.org/doc/58251},
}

\bib{Baer}{article}{
 author={Bär, Christian},
 title={Dirac eigenvalues and the hyperspherical radius},
 year={2026},
 journal={J. Europ. Math. Soc., online first},
 doi={10.4171/JEMS/1754},
 zbl={.},
 mr={.},
}

\bib{BB}{article}{
 author={Bär, Christian},
 author={Ballmann, Werner},
 isbn={978-1-57146-237-4},
 zbl={1331.58022},
 mr={3076058},
 doi={10.4310/SDG.2012.v17.n1.a1},
 title={Boundary value problems for elliptic differential operators of first order},
 journal={Surv. Differ. Geom.},
 volume={17},
 pages={1--78},
 date={2012},
}

\bib{BBHW}{article}{
   author={Bär, Christian},
   author={Brendle, Simon},
   author={Hanke, Bernhard},
   author={Wang, Yipeng},
   title={Scalar curvature rigidity of warped product metrics},
   journal={SIGMA Symmetry Integrability Geom. Methods Appl.},
   volume={20},
   date={2024},
   pages={Paper No. 035, 26},
   zbl={07846465},
   mr={4733718},
   doi={10.3842/SIGMA.2024.035},
}

\bib{BH}{article}{
 author={Bär, Christian},
 author={Hanke, Bernhard},
 isbn={978-981-12-4999-0},
 isbn={978-981-12-4935-8},
 isbn={978-981-12-4937-2},
 journal={In: Gromov, M.; Lawson, H.B. (eds): Perspectives in scalar curvature, Vol. 2, World Scientific, Singapore},
 doi={10.1142/9789811273230\_0010},
 zbl={1530.53054},
 mr={4577919},
 title={Boundary conditions for scalar curvature},
 pages={325--377},
 date={2023},
}

\bib{BTW}{arxiv}{
 author={Brendle, Simon},
 author={Tsiamis, Raphael},
 author={Wang, Yipeng},
 arx={2510.17780},
 title={On fill-ins with scalar curvature bounded from below and an inequality of Hijazi-Montiel-Rold{\'a}n},
 year={2025},
 url={\url{https://doi.org/10.48550/arXiv.2510.17780}},
}

\bib{CHZ}{arxiv}{
 author={Cecchini, Simone},
 author={Hirsch, Sven},
 author={Zeidler, Rudolf},
 arx={2404.17533},
 title={Rigidity of spin fill-ins with non-negative scalar curvature},
 year={2024},
}

\bib{EMW}{article}{
 author={Eichmair, Michael},
 author={Miao, Pengzi},
 author={Wang, Xiaodong},
 issn={0944-2669},
 issn={1432-0835},
 doi={10.1007/s00526-011-0402-2},
 zbl={1238.53025},
 mr={2860402},
 title={Extension of a theorem of Shi and Tam},
 journal={Calc. Var. Part. Diff. Eq.},
 volume={43},
 number={1-2},
 pages={45--56},
 date={2012},
 publisher={Springer, Berlin/Heidelberg},
}

\bib{FHH}{arxiv}{
 author={Frenck, Georg},
 author={Hanke, Bernhard},
 author={Hirsch, Sven},
 arx={2601.10617},
 title={Surgery and total mean curvature},
 year={2026},
}

\bib{Fr}{article}{
 author={Friedrich, Thomas},
 issn={0010-1354},
 issn={1730-6302},
 doi={10.4064/cm-48-1-57-62},
 zbl={0542.53026},
 mr={0750754},
 language={German},
 title={Zur Abhängigkeit des Dirac-Operators von der Spin-Struktur},
 journal={Colloq. Math.},
 volume={48},
 pages={57--62},
 date={1984},
 publisher={Polish Academy of Sciences (Polska Akademia Nauk - PAN), Institute of Mathematics (Instytut Matematyczny), Warsaw},
}

\bib{G}{article}{
 author={Gromov, Misha},
 isbn={978-981-12-4935-8},
 isbn={978-981-12-4937-2},
 isbn={978-981-12-4998-3},
 isbn={978-981-12-4999-0},
 journal={In: Gromov, M.; Lawson, H.B. (eds): Perspectives in scalar curvature, Vol. 1, World Scientific, Singapore},
 doi={10.1142/9789811273223\_0001},
 zbl={1532.53003},
 mr={4577903},
 title={Four lectures on scalar curvature},
 pages={1--514},
 date={2023},
}

\bib{HMR}{article}{
 author={Hijazi, Oussama},
 author={Montiel, Sebasti{\'a}n},
 author={Rold{\'a}n, Antonio},
 issn={0232-704X},
 issn={1572-9060},
 doi={10.1023/A:1022808916165},
 zbl={1032.53040},
 mr={1966847},
 title={Dirac operators on hypersurfaces of manifolds with negative scalar curvature},
 journal={Ann. Glob. Anal. Geom.},
 volume={23},
 number={3},
 pages={247--264},
 date={2003},
 publisher={Springer Netherlands, Dordrecht},
}

\bib{MM}{article}{
 author={Mantoulidis, Christos},
 author={Miao, Pengzi},
 issn={0010-3616},
 issn={1432-0916},
 doi={10.1007/s00220-016-2767-8},
 zbl={1375.58017},
 mr={3627410},
 title={Total mean curvature, scalar curvature, and a variational analog of Brown-York mass},
 journal={Commun. Math. Phys.},
 volume={352},
 number={2},
 pages={703--718},
 date={2017},
 publisher={Springer, Berlin/Heidelberg},
}

\bib{ST}{article}{
 author={Shi, Yuguang},
 author={Tam, Luen-Fai},
 issn={0022-040X},
 issn={1945-743X},
 doi={10.4310/jdg/1090425530},
 zbl={1071.53018},
 mr={1987378},
 title={Positive mass theorem and the boundary behaviors of compact manifolds with nonnegative scalar curvature.},
 journal={J. Diff. Geom.},
 volume={62},
 number={1},
 pages={79--125},
 date={2002},
 publisher={International Press of Boston, Somerville, MA},
}

\bib{SWW}{article}{
 author={Shi, Yuguang},
 author={Wang, Wenlong},
 author={Wei, Guodong},
 issn={0075-4102},
 issn={1435-5345},
 doi={10.1515/crelle-2021-0072},
 zbl={1531.53040},
 mr={4388336},
 title={Total mean curvature of the boundary and nonnegative scalar curvature fill-ins},
 journal={J. Reine Angew. Math.},
 volume={784},
 pages={215--250},
 date={2022},
 publisher={De Gruyter, Berlin},
}

\bib{W}{arxiv}{
 author={Wang, Yipeng},
 arx={2411.14667},
 title={Fill-ins of tori with scalar curvature bounded from below},
 year={2024},
}
\end{biblist}
\end{bibdiv}

\end{document}